\numberwithin{equation}{section}
\newtheorem{theorem}{Theorem}[section]
\newtheorem{proposition}[theorem]{Proposition}
\newtheorem{lemma}[theorem]{Lemma}
\newtheorem{definition}[theorem]{Definition}
\newtheorem{The main theorem}[theorem]{The main theorem}
\theoremstyle{definition}
\begin{document}

\title[Convergence in capacity]{Convergence in capacity of plurisubharmonic functions with given boundary values}

\author{Nguyen Xuan Hong, Nguyen Van Trao and Tran Van Thuy}
\address{Department of Mathematics\\ Hanoi National University of Education\\ 136 Xuan Thuy Street, Caugiay District, Hanoi, Vietnam}
\email{xuanhongdhsp@yahoo.com, ngvtrao@yahoo.com and  thuyhum@gmail.com}
\date{}

\maketitle


\renewcommand{\thefootnote}{}

\footnote{2010 \emph{Mathematics Subject Classification}: 32W20.}

\footnote{\emph{Key words and phrases}: Plurisubharmonic functions, Complex Monge-Amp\`{e}re operator, Convergence in capacity.}

\renewcommand{\thefootnote}{\arabic{footnote}}
\setcounter{footnote}{0}

\date{}

\begin{abstract}
In this paper, we study the convergence in the capacity of sequence of plurisubharmonic functions.
As an application, we prove  stability results for solutions of the complex Monge-Amp\`ere equations.
\end{abstract}

\maketitle

\section{Introduction}
It is well-known that convergence in the sense of distributions of plurisubharmonic functions does not in general imply convergence of their Monge-Amp\`ere measures.  Therefore, it is important to find conditions on sequences of plurisubharmonic functions such that the corresponding Monge-Amp\`ere measures are convergent in the weak* topology. 

Bedford and  Taylor \cite{BT82} introduced and studied in 1982 the $C_n$-capacity of   Borel sets. 
Xing \cite{Xi96} proved in 1996 that the complex Monge-Amp\`{e}re operator is continuous under convergence of bounded plurisubharmonic functions in $C_n$-capacity. 
He gave a sufficient condition for the weak convergence of complex Monge-Amp\`{e}re mass  of bounded plurisubharmonic functions.
Later,  Xing \cite{Xing} studied in 2008 the convergence in the $C_n$-capacity of a sequence of plurisubharmonic functions in the class $\mathcal F^a(\Omega)$. 
Hiep \cite{H3} studied in 2010 the convergence in $C_n$-capacity within the class $\mathcal E(\Omega)$.
Recently, Cegrell \cite{Ce3} proved in 2012 that if a sequence of plurisubharmonic functions is bounded from below by a function from the Cegrell class $\mathcal E(\Omega)$ and   convergent in $C_{n-1}$-capacity then the corresponding complex Monge-Amp\`ere measures are convergent in the weak* topology.

The purpose of this paper is to study conditions on a sequence of plurisubharmonic functions which are  equivalent to convergence in   $C_n$-capacity. 
Our main result is the following theorem.\\

\noindent
{\bf Main theorem. }{\em 
Let  $\Omega$  be a bounded hyperconvex domain in $\mathbb C^n$ and let $f\in\mathcal E(\Omega)$, $w\in\mathcal N^a(\Omega,f)$ such that $\int_\Omega (-\rho) (dd^c w)^n<+\infty$   for some $\rho\in \mathcal E_0(\Omega)$.  Assume that       $\{u_j\}\subset \mathcal N^a(\Omega,f)$ such that $u_j\to u_0$ a.e. on $\Omega$ as $j\to+\infty$ and $u_j\geq w$ in $\Omega$ for all $j\geq0$. Then, the following statements are equivalent.

(a)  $ u_j\to u_0$ in  $C_n$-capacity in $\Omega$;

(b)   For every $a>0$, 
we have
$$\lim_{j\to+\infty} \int_{\Omega} \max\left(\frac{u_j}{a},\rho\right) (dd^c u_j)^n= \int_{\Omega} \max\left(\frac{u_0}{a},\rho\right) (dd^c u_0 )^n.$$  

(c) For every $a >0$, we have    
$$\lim_{j\to+\infty} \int_{\Omega} \left[\max\left(\frac{v_j}{a},\rho\right) -\max\left(\frac{u_j}{a},\rho\right) \right] (dd^c u_j)^n= 0,$$   
where 
$v_j:=\left( \sup _{k\geq j} u_{k} \right)^*.$
}\\

The paper is organized as follows. In Section 2 we recall some notions of pluripotential theory.  Section 3 is devoted to the proof of the  main theorem. In Section 4 we   apply  the  main theorem to prove a stability result  for  the solutions of certain complex Monge-Amp\`ere equations.

\section{Preliminaries} 
Some elements of pluripotential theory that will be used throughout the paper can be found
in \cite{ACCH}-\cite{Xing}. 

\begin{definition}{\rm 
Let $n$ be a positive integer. A bounded domain $\Omega$ in $\mathbb C^n$ is called bounded hyperconvex domain if  there exists
a bounded plurisubharmonic function $\varphi: \Omega \to (-\infty,0)$ such that the closure of the set $\{z\in \Omega: \varphi(z)<c\}$  is compact in $\Omega$, for every
$c\in(-\infty,0)$.
}\end{definition} 

We denote by $PSH(\Omega)$ the family of
plurisubharmonic functions defined on $\Omega$ and $PSH^{-}(\Omega)$
denotes the set of negative plurisubharmonic functions on $\Omega$.
By $MPSH(\Omega)$  denotes the set of all maximal  plurisubharmonic functions in $\Omega$.

\begin{definition}{\rm
Let $\Omega$ be a bounded hyperconvex domain in $\mathbb C^n$. 
We say that a bounded, negative  plurisubharmonic function $\varphi$ in
$\Omega$ belongs to $\mathcal E_0(\Omega)$ if $\{\varphi<-\varepsilon\}\Subset \Omega$ for all $\varepsilon>0$ and $\int_\Omega(dd ^c \varphi)^n < +\infty$.

Let $\mathcal F(\Omega)$ be the family of plurisubharmonic functions $\varphi$
defined on $\Omega$, such that there exists a decreasing sequence
$\{\varphi_j\}\subset \mathcal E_0(\Omega)$ that converges pointwise
to $\varphi$ on $\Omega$ as $j\to +\infty$ and
$$\sup_j \int_\Omega (dd^ c \varphi_j)^n<+\infty.$$

We denote by $\mathcal E(\Omega)$ the family of
plurisubharmonic functions $\varphi$ defined on $\Omega$ such that
for every open set $G\Subset\Omega$ there exists a plurisubharmonic function
$\psi\in\mathcal F(\Omega)$ satisfy $\psi=\varphi$ in $G$.

Let $u\in\mathcal E(\Omega)$ and let $\{\Omega_j\}$ be an increasing sequence of bounded hyperconvex domains such that $\Omega_j\Subset\Omega_j\Subset \Omega$ and $\bigcup_{j=1}^{+\infty}\Omega_j =\Omega$. Put 
$$u^j:=\sup\{\varphi\in PSH^-(\Omega): \varphi \leq u \text{ in } \Omega\backslash \Omega_j \}$$
and 
$\mathcal N(\Omega):=\{u\in\mathcal E(\Omega): u^j \nearrow 0 \text{ a.e. in }\Omega\}$.

Let $\mathcal K\in \{ \mathcal F, \mathcal N, \mathcal E\}$. We denote by $\mathcal K^a(\Omega)$  the subclass of $\mathcal K(\Omega)$   such that the Monge-Amp\`{e}re measure
$(dd^c .)^n$ vanishes  on all pluripolar sets of $\Omega$.

Let $f\in\mathcal E(\Omega)$ and $\mathcal K\in \{  \mathcal F^a, \mathcal N^a,  \mathcal E^a,\mathcal F, \mathcal N, \mathcal E\}$. Then we say that a
plurisubharmonic function $\varphi$ defined on $\Omega$ belongs to $\mathcal K(\Omega, f)$ if there exists a function
$\psi\in \mathcal K(\Omega)$ such that
$$\psi+f\leq \varphi \leq f \text{ in }\Omega.$$
}\end{definition}

Now we will show that if  $u\in \mathcal N^a(\Omega,f)$ then the  pluripolar part of  $(dd^c u)^n$ is carried by $\{f=-\infty\}$.

\begin{proposition}\label{pro-1-1-1-1-1}
Let  $\Omega$  be a bounded hyperconvex domain in $\mathbb C^n$. Assume that $f\in  \mathcal E(\Omega)$ and $u\in \mathcal N^a(\Omega,f)$ such that $\int_{\Omega}(-\rho)(dd^c u)^n<+\infty$ for some $\rho\in\mathcal E_0(\Omega)$. Then 
$$1_{\{u=-\infty\}}(dd^c u)^n=1_{\{f=-\infty\}}(dd^c f)^n \text{ in }\Omega.$$
\end{proposition}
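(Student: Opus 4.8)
The plan is to pin down the pluripolar part of $(dd^c u)^n$ by trapping it between the pluripolar masses of $f$ and of an auxiliary function coming from the defining inequality of $\mathcal N^a(\Omega,f)$, and then to show that both bounds coincide with $1_{\{f=-\infty\}}(dd^c f)^n$. First I would record the setup. Since $u\in\mathcal N^a(\Omega,f)$ there is $\psi\in\mathcal N^a(\Omega)$ with $\psi+f\le u\le f$ on $\Omega$; put $g:=\psi+f$. As $\psi,f\in\mathcal E(\Omega)$ and $\mathcal E(\Omega)$ is stable under addition and under sandwiching, both $g\in\mathcal E(\Omega)$ and $u\in\mathcal E(\Omega)$, so all Monge-Amp\`ere measures below are well defined; the hypothesis $\int_\Omega(-\rho)(dd^c u)^n<+\infty$ guarantees finiteness of the relevant masses and legitimizes the comparison steps. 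From $u\le f$ we get $\{f=-\infty\}\subseteq\{u=-\infty\}$, and both sets are pluripolar. The basic tool I would invoke is the comparison of pluripolar masses: if $\varphi\le\phi$ in $\mathcal E(\Omega)$, then $1_{\{\phi=-\infty\}}(dd^c\phi)^n\le 1_{\{\phi=-\infty\}}(dd^c\varphi)^n$ (a more singular function carries at least as much mass on the polar set of the less singular one).

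Next I would assemble the chain
\[
1_{\{f=-\infty\}}(dd^c f)^n\le 1_{\{f=-\infty\}}(dd^c u)^n\le 1_{\{u=-\infty\}}(dd^c u)^n\le 1_{\{u=-\infty\}}(dd^c g)^n .
\]
The first inequality is the comparison tool applied to $u\le f$ on $\{f=-\infty\}$; the second is trivial from $\{f=-\infty\}\subseteq\{u=-\infty\}$ and positivity; the third is the comparison tool applied to $g\le u$ on $\{u=-\infty\}$. It then remains only to show that the right-hand side equals the left-hand side, which collapses the whole chain to equalities.

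For this identification I would expand
\[
(dd^c g)^n=(dd^c(\psi+f))^n=\sum_{k=0}^n\binom{n}{k}(dd^c\psi)^k\wedge(dd^c f)^{n-k},
\]
and argue that every term with $k\ge1$ puts no mass on pluripolar sets. Here the hypothesis $\psi\in\mathcal N^a(\Omega)$ is decisive: $(dd^c\psi)^n$ vanishes on pluripolar sets, and by the mixed Monge-Amp\`ere (Chern--Levine--Nirenberg type) inequalities this forces each mixed product $(dd^c\psi)^k\wedge(dd^c f)^{n-k}$ with $k\ge1$ to vanish on pluripolar sets as well. Consequently, on the pluripolar set $\{u=-\infty\}$ only the term $k=0$ survives, so $1_{\{u=-\infty\}}(dd^c g)^n=1_{\{u=-\infty\}}(dd^c f)^n$. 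Finally, since the pluripolar part of $(dd^c f)^n$ is carried by $\{f=-\infty\}$, the measure $(dd^c f)^n$ gives no mass to the pluripolar set $\{u=-\infty\}\cap\{f>-\infty\}$, whence $1_{\{u=-\infty\}}(dd^c f)^n=1_{\{f=-\infty\}}(dd^c f)^n$. This closes the chain and yields $1_{\{u=-\infty\}}(dd^c u)^n=1_{\{f=-\infty\}}(dd^c f)^n$.

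The main obstacle is exactly the vanishing of the mixed products $(dd^c\psi)^k\wedge(dd^c f)^{n-k}$ on pluripolar sets for $k\ge1$: the comparison of pluripolar masses and the localization of the pluripolar part of $(dd^c f)^n$ on $\{f=-\infty\}$ are standard, but upgrading the ``no pluripolar mass'' property from the top power $(dd^c\psi)^n$ to all mixed products is the delicate point, and it is precisely there that the regularity encoded in the class $\mathcal N^a(\Omega)$, together with the integrability $\int_\Omega(-\rho)(dd^c u)^n<+\infty$, must be exploited.
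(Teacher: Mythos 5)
Your skeleton is the paper's: from $\psi\in\mathcal N^a(\Omega)$ with $\psi+f\le u\le f$ you sandwich the pluripolar part of $(dd^c u)^n$ by the monotonicity of pluripolar Monge--Amp\`ere mass, exactly as the paper does with Lemma 4.1 of \cite{ACCH}, obtaining
$$1_{\{f=-\infty\}}(dd^c f)^n\le 1_{\{u=-\infty\}}(dd^c u)^n\le 1_{\{u=-\infty\}}(dd^c (\psi+f))^n,$$
and everything then reduces to identifying the right-hand side with $1_{\{f=-\infty\}}(dd^c f)^n$. The paper closes this in one stroke by citing Lemma 4.12 of \cite{ACCH}, which says precisely that $1_{\{v+f=-\infty\}}(dd^c(v+f))^n=1_{\{f=-\infty\}}(dd^c f)^n$ for such $v$. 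You instead try to prove this identification via the multinomial expansion of $(dd^c(\psi+f))^n$ and the claim that every mixed term $(dd^c\psi)^k\wedge(dd^c f)^{n-k}$ with $k\ge1$ puts no mass on pluripolar sets.

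That claim is exactly where your proof has a genuine gap. You assert it follows ``by the mixed Monge--Amp\`ere (Chern--Levine--Nirenberg type) inequalities,'' but CLN-type estimates bound masses on compact sets by sup-norms and are blind to null sets; they cannot exclude pluripolar mass. Worse, given the standard fact that mixed Monge--Amp\`ere measures of $\mathcal E$-functions do not charge pluripolar subsets of the set where all factors are finite, your mixed-term claim is \emph{equivalent} to the identity you are trying to establish, so at the decisive point you have restated \cite[Lemma 4.12]{ACCH} rather than proved it. The statement is also genuinely delicate and cannot follow from soft positivity: in the unit ball of $\mathbb C^2$, with the bounded function $\psi=|z|^2-1\in\mathcal E_0$ and $f=\log|z_1|$, the current product $dd^c\psi\wedge dd^c f=2\pi\, dd^c\psi\wedge[z_1{=}0]$ visibly charges the pluripolar set $\{z_1=0\}$ even though $(dd^c\psi)^2$ charges no pluripolar set; this does not contradict the proposition only because $\log|z_1|\notin\mathcal E(\mathbb B)$ (the approximants $\max(\log|z_1|,m\log|z_2|)$ have Monge--Amp\`ere masses tending to infinity). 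So any correct proof of your mixed-term vanishing must exploit the hypothesis $f\in\mathcal E$ quantitatively, via the integration-by-parts and energy machinery behind \cite[Lemma 4.12]{ACCH} --- which is what that lemma encapsulates. The repair is immediate: replace your expansion step by a citation of (or a reproduction of the proof of) Lemma 4.12 in \cite{ACCH}; with that substitution your chain of inequalities becomes exactly the paper's proof.
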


\begin{proof}
Let $v\in \mathcal F^a(\Omega)$ such that
$v+f\leq u\leq f$ in $\Omega$. By Lemma 4.1 and Lemma 4.12 in \cite{ACCH} we have
\begin{align*}
1_{\{f=-\infty\}}(dd^c f)^n
&\leq 1_{\{u=-\infty\}}(dd^c u)^n 
\\&\leq 1_{\{v+f=-\infty\}}(dd^c (v+f))^n
=1_{\{f=-\infty\}}(dd^c f)^n.
\end{align*}
It follows that  
$$1_{\{u=-\infty\}}(dd^c u)^n=1_{\{f=-\infty\}}(dd^c f)^n \text{ in }\Omega.$$
The proof is complete.
\end{proof}

\begin{proposition}\label{Hong}
Let  $\Omega$  be a bounded hyperconvex domain in $\mathbb C^n$. Let  $f\in  \mathcal E(\Omega)$ and $u\in \mathcal N^a(\Omega,f)$ such that $\int_{\Omega}(-\rho)(dd^c u)^n<+\infty$ for some $\rho\in\mathcal E_0(\Omega)$. Assume that
$v\in \mathcal E(\Omega)$ such that $v\leq f$ and $(dd^c v)^n \geq (dd^c u)^n$ in $\Omega$. 
Then  $v\leq u$ on $\Omega$.
\end{proposition}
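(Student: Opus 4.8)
The plan is to show that the set $\{v>u\}$ is pluripolar; since a pluripolar set is Lebesgue--null and both $u$ and $v$ are plurisubharmonic, this gives $v\le u$ everywhere on $\Omega$. The whole argument rests on a comparison principle adapted to the class $\mathcal N(\Omega,f)$. The point is that because $u\in\mathcal N^a(\Omega,f)$ its generalized boundary values agree with those of $f$, whereas $v\le f$, so $v$ lies below $u$ at the boundary; combined with the mass inequality $(dd^c v)^n\ge(dd^c u)^n$ this should force $v\le u$ in the interior. Concretely, I would first establish, for every $w\in\mathcal E(\Omega)$ with $w\le f$, the inequality
\[
\int_{\{u<w\}}(dd^c w)^n\le\int_{\{u<w\}}(dd^c u)^n .
\]
This is precisely where the hypotheses $u\in\mathcal N^a(\Omega,f)$ and $\int_\Omega(-\rho)(dd^c u)^n<+\infty$ enter: they guarantee that the relevant Monge--Amp\`ere masses are finite and that the boundary terms in the comparison principle (cf. \cite{ACCH}) vanish, so that the argument goes through with $\mathcal N$-boundary values rather than the more comfortable $\mathcal F$-boundary values.

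Granting this, the second step is a perturbation. I would fix an arbitrary $\psi\in\mathcal E_0(\Omega)$ and $\varepsilon>0$ and apply the inequality to $w:=v+\varepsilon\psi\in\mathcal E(\Omega)$, which still satisfies $w\le f$ since $\psi\le0$. Using the superadditivity of the Monge--Amp\`ere operator together with $(dd^c v)^n\ge(dd^c u)^n$,
\[
(dd^c w)^n\ge(dd^c v)^n+\varepsilon^n(dd^c\psi)^n\ge(dd^c u)^n+\varepsilon^n(dd^c\psi)^n .
\]
Substituting into the comparison inequality and cancelling the finite term $\int_{\{u<w\}}(dd^c u)^n$ forces $(dd^c\psi)^n(\{u<v+\varepsilon\psi\})=0$. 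Since $\psi$ is bounded and $\le 0$, the sets $\{u<v+\varepsilon\psi\}$ increase to $\{u<v\}$ as $\varepsilon\downarrow0$, so by monotone continuity of the measure $(dd^c\psi)^n$ I obtain $(dd^c\psi)^n(\{u<v\})=0$ for \emph{every} $\psi\in\mathcal E_0(\Omega)$.

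To convert this into pluripolarity of $\{u<v\}$, I would take $\psi$ to be the relative extremal function of an arbitrary compact set $K\subset\{u<v\}$. Then $\psi\in\mathcal E_0(\Omega)$, the measure $(dd^c\psi)^n$ is carried by $K$, and its total mass equals the relative capacity of $K$; hence $(dd^c\psi)^n(\{u<v\})=0$ forces $\operatorname{Cap}(K,\Omega)=0$. As $K$ ranges over all compact subsets, $\{u<v\}$ has zero relative capacity, so it is pluripolar, and the reduction at the start completes the proof.

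I expect the main obstacle to be the first step, namely justifying the comparison inequality for functions with only $\mathcal N$-type boundary behaviour and, above all, verifying that the finiteness assumption $\int_\Omega(-\rho)(dd^c u)^n<+\infty$ really controls the mass of $(dd^c u)^n$ on $\{u<v\}$, so that the cancellation in the second step is legitimate and no mass escapes to $\partial\Omega$. Here I would rely on Proposition~\ref{pro-1-1-1-1-1} to dispose of the pluripolar part $\{u=-\infty\}$ of the measures and reduce matters to the locally bounded setting in which the classical Bedford--Taylor comparison principle is directly available.
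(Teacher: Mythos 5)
The decisive gap is the cancellation in your second step. From
\[
\int_{\{u<w\}}(dd^c u)^n+\varepsilon^n\int_{\{u<w\}}(dd^c\psi)^n\le\int_{\{u<w\}}(dd^c u)^n
\]
you may conclude $\int_{\{u<w\}}(dd^c\psi)^n=0$ only if $\int_{\{u<w\}}(dd^c u)^n<+\infty$, and nothing in the hypotheses gives this: $f$ is merely in $\mathcal E(\Omega)$, so $(dd^c u)^n$ can have infinite total mass, the assumption $\int_\Omega(-\rho)(dd^c u)^n<+\infty$ controls only the mass weighted by $-\rho$, which degenerates at $\partial\Omega$, and the set $\{u<v+\varepsilon\psi\}$ need not be relatively compact in $\Omega$. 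Already for $n=1$ on the disc, take $u=\psi_0+f$ with $\psi_0\in\mathcal E_0(\Omega)$, $\psi_0\not\equiv 0$, and $dd^c f=\sum_k c_k\delta_{z_k}$ with $|z_k|\to1$, $\sum_k c_k=+\infty$ but $\sum_k c_k(1-|z_k|)<+\infty$: the standing hypotheses hold (with $\rho\asymp-(1-|z|)$), yet for $w=f$ the set $\{u<w\}=\{\psi_0<0\}$ carries infinite $(dd^c u)$-mass, so your comparison inequality reads $\infty\le\infty$ and the perturbation argument yields nothing. The same finiteness problem infects your first step: the \emph{unweighted} inequality $\int_{\{u<w\}}(dd^c w)^n\le\int_{\{u<w\}}(dd^c u)^n$ for arbitrary $w\in\mathcal E(\Omega)$, $w\le f$, is not what is proved in \cite{ACCH}; what is available there (Lemma 3.5, reflected in Lemma \ref{le1} of this paper) is the version weighted by $-\varphi$ with $\varphi\in\mathcal E_0(\Omega)$, $\varphi\ge\rho$, precisely because only the weighted masses are finite under the energy hypothesis. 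Moreover, comparison principles for measures that charge pluripolar sets are genuinely delicate (this is the subject of \cite{KH}), so this step cannot be waved through with a ``cf.''

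By contrast, the paper sidesteps all of this with a short reduction: replace $v$ by $\max(u,v)$; by Proposition 4.3 in \cite{KH} applied to the non-pluripolar measure $1_{\{u>-\infty\}}(dd^c u)^n$ one gets $(dd^c\max(u,v))^n\ge 1_{\{u>-\infty\}}(dd^c u)^n$, while Proposition \ref{pro-1-1-1-1-1} identifies the pluripolar parts of both measures with $1_{\{f=-\infty\}}(dd^c f)^n$; hence $(dd^c\max(u,v))^n\ge(dd^c u)^n$ with $u\le\max(u,v)\le f$, and the rigidity theorem (Theorem 3.6 in \cite{ACCH}) gives $\max(u,v)=u$, i.e.\ $v\le u$. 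If you wanted to salvage your scheme you would have to insert the weight $-\psi$ throughout and control the pluripolar parts as above — at which point you would essentially be reproving Theorem 3.6 of \cite{ACCH} rather than using it. Your final step (extremal functions of compact subsets, inner capacity, capacitability, pluripolarity implying $v\le u$ a.e.\ and hence everywhere) is fine; the failure is upstream.
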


\begin{proof}
Since the measure $1_{\{u>-\infty\}} (dd^c u)^n$ vanishes on all pluripolar subsets of $\Omega$, by Proposition 4.3 in \cite{KH} we get
$$(dd^c \max (u,v) )^n \geq  1_{\{u>-\infty\}} (dd^c u)^n.$$
Hence, 
$$ 1_{\{ \max (u,v) >-\infty\}} (dd^c \max (u,v) )^n \geq  1_{\{u>-\infty\}} (dd^c u)^n.$$
Moreover, by the hypotheses and Proposition \ref{pro-1-1-1-1-1} we have 
$$1_{\{ \max (u,v)  =-\infty\}} (dd^c  \max (u,v) )^n = 1_{\{u=-\infty\}} (dd^c u)^n.$$
Hence, $(dd^c  \max (u,v) )^n \geq(dd^c u)^n$ in $\Omega$. Therefore, from Theorem 3.6 in \cite{ACCH} it follows that $\max (u,v)=u$ in $\Omega$. Thus, $v\leq u$ in $\Omega$. The proof is complete.
\end{proof}

\section{Proof of  the main theorem}

In order to prove the main theorem, we need the following auxiliary lemmas.

\begin{lemma} \label{le1}
Let  $\Omega$  be a bounded hyperconvex domain in $\mathbb C^n$  and let $f\in  \mathcal E(\Omega)$. Assume that $\rho\in\mathcal E_0(\Omega)$ and $u \in  \mathcal N^a(\Omega,f)$   such that  $\int_{\Omega}(-\rho)(dd^c u)^n<+\infty$. Then for every $v\in\mathcal E^a(\Omega,f)$  and for every $\varphi \in \mathcal E_0(\Omega)$ with $\varphi \geq \rho $, we have
\begin{align*}
\frac{1}{n!}\int_{\{u <v\}}(v- u)^n(dd^c \varphi )^n&+\int_{\{u<v\}}-\varphi (dd^c v)^n
\\&\leq \int_{\{u<v\}}- \varphi (dd^cu)^n.
\end{align*}
\end{lemma}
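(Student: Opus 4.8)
The plan is to establish the inequality first for \emph{bounded} $u$ and $v$ and then recover the general case by truncation, using the hypotheses $\varphi\ge\rho$ and $\int_\Omega(-\rho)(dd^cu)^n<+\infty$ to guarantee that all integrals are finite and that the limit passage is legitimate. Concretely, I would set $u_s=\max(u,-s)$ and $v_s=\max(v,-s)$, prove the estimate for the bounded pair $(u_s,v_s)$, and then let $s\to+\infty$. Since $-\varphi\le-\rho$, the bound $\int_\Omega(-\varphi)(dd^cu)^n\le\int_\Omega(-\rho)(dd^cu)^n<+\infty$ supplies the domination needed to pass to the limit, while the fact that $v\in\mathcal E^a(\Omega,f)$, so that $(dd^cv)^n$ charges no pluripolar set, lets me discard $\{u=-\infty\}$; the main care here is that the integration set $\{u_s<v_s\}$ must be reconciled with $\{u<v\}$ as $s\to+\infty$. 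Because $u$ and $v$ share the boundary behaviour $f$, the function $g:=\max(u,v)-u=\max(0,v-u)$ tends to $0$ towards $\partial\Omega$, which is exactly what makes the integration by parts below free of boundary terms.

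The bounded case I would reduce to a global statement by the maximum trick. Put $w:=\max(u,v)$, so that $u\le w$ and $g=w-u\ge0$ vanishes off $\{u<v\}$; on the open set $\{u<v\}$ one has $w=v$ and hence $(dd^cw)^n=(dd^cv)^n$ there, while on $\{u>v\}$ one has $(dd^cw)^n=(dd^cu)^n$. Consequently $\int_{\{u<v\}}(v-u)^n(dd^c\varphi)^n=\int_\Omega g^n(dd^c\varphi)^n$, and it suffices to prove the global energy inequality
\[
\int_\Omega g^n(dd^c\varphi)^n\le n!\int_\Omega(-\varphi)\big[(dd^cu)^n-(dd^cw)^n\big].
\]

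The heart of the argument, and the step I expect to be the main obstacle, is this global inequality. I would attack it by integration by parts: moving one $dd^c$ from $\varphi$ onto $g^n$ gives $\int_\Omega g^n(dd^c\varphi)^n=\int_\Omega\varphi\,dd^c(g^n)\wedge(dd^c\varphi)^{n-1}$, and since $dd^c(g^n)=n\,g^{n-1}dd^cg+n(n-1)g^{n-2}\,dg\wedge d^cg$ with $\varphi\le0$, the nonnegative gradient term $dg\wedge d^cg$ may be discarded, leaving $\int_\Omega g^n(dd^c\varphi)^n\le n\int_\Omega(-\varphi)\,g^{n-1}(dd^cu-dd^cw)\wedge(dd^c\varphi)^{n-1}$. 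Iterating while lowering the power of $g$ is designed to produce the telescoped expression $(dd^cu)^n-(dd^cw)^n$ together with the factor $n!$ (the case $n=1$ is already an equality, which fixes the constant). The delicate point is that the naive repeated integration by parts turns out to be circular, so one must instead control the mixed gradient currents $d\varphi\wedge d^cg\wedge T$ by the Cauchy--Schwarz estimate $2\,|d\varphi\wedge d^cg\wedge T|\le\varepsilon\,d\varphi\wedge d^c\varphi\wedge T+\varepsilon^{-1}dg\wedge d^cg\wedge T$, and then feed the resulting nonnegative terms back through integration by parts. This Cauchy--Schwarz bookkeeping, together with tracking the binomial coefficients, is where the real work lies.

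Finally I would replace $w$ by $v$ on the right-hand side. Splitting $\Omega$ into $\{u<v\}$, $\{u>v\}$ and $\{u=v\}$, the first piece already gives $\int_{\{u<v\}}(-\varphi)[(dd^cu)^n-(dd^cv)^n]$ and the second vanishes; for the contact set I would invoke the sharp comparison $1_{\{u=v\}}(dd^c\max(u,v))^n\ge 1_{\{u=v\}}(dd^cu)^n$, obtained from $(dd^c\max(u+\varepsilon,v))^n\ge 1_{\{u+\varepsilon>v\}}(dd^cu)^n$ by letting $\varepsilon\downarrow0$, which makes the contribution of $\{u=v\}$ nonpositive since $-\varphi\ge0$. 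Hence $\int_\Omega(-\varphi)[(dd^cu)^n-(dd^cw)^n]\le\int_{\{u<v\}}(-\varphi)[(dd^cu)^n-(dd^cv)^n]$, and combining this with the global inequality, dividing by $n!$, and rearranging yields precisely the stated estimate.
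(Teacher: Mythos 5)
Your proposal has a genuine gap at its center: the global energy inequality
$\frac{1}{n!}\int_\Omega g^n(dd^c\varphi)^n\leq \int_\Omega(-\varphi)\left[(dd^cu)^n-(dd^cw)^n\right]$
for $u\leq w$ is never actually established. You sketch an integration by parts, observe yourself that the naive iteration is circular, and defer the resolution to unspecified Cauchy--Schwarz bookkeeping, which you concede is ``where the real work lies.'' That estimate is precisely Lemma 3.5 of \cite{ACCH}, which the paper invokes as a black box; since that lemma is already valid for the unbounded Cegrell classes, the paper needs no truncation at all. It sets $v_j=\max(u,v-\frac{1}{j})\in\mathcal F^a(\Omega,f)$, applies \cite[Lemma 3.5]{ACCH} to the pair $u\leq v_j$, localizes to $\{u<v_j\}$ using Theorem 4.1 of \cite{KH} (so $v_j=v-\frac{1}{j}$ there, and $(dd^cu)^n=(dd^cv_j)^n$ on $\{u\geq v\}$ after matching pluripolar parts via Proposition \ref{pro-1-1-1-1-1}), and lets $j\to+\infty$. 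Your truncation scheme $u_s=\max(u,-s)$, $v_s=\max(v,-s)$ creates additional unresolved limit problems: $(dd^cu_s)^n$ converges to $(dd^cu)^n$ only weakly, and passing to the limit over the varying sets $\{u_s<v_s\}=\{u<v\}\cap\{v>-s\}$ with the unbounded weight $-\varphi$ requires an argument you do not supply.

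There is also a concrete error: you assert that $v\in\mathcal E^a(\Omega,f)$ implies $(dd^cv)^n$ charges no pluripolar sets. This misreads the notation --- the superscript $a$ qualifies the minorant $\psi\in\mathcal E^a(\Omega)$ with $\psi+f\leq v\leq f$, not $v$ itself. By the squeezing argument of Proposition \ref{pro-1-1-1-1-1} (via \cite{ACCH}, Lemmas 4.1 and 4.12) one has $1_{\{v=-\infty\}}(dd^cv)^n=1_{\{f=-\infty\}}(dd^cf)^n$, which is nonzero whenever $(dd^cf)^n$ charges $\{f=-\infty\}$; the same holds for $u$ and for $w=\max(u,v)$. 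Consequently you cannot simply ``discard $\{u=-\infty\}$'': on $\{u=v=-\infty\}\supseteq\{f=-\infty\}$ the measure $(dd^cu)^n$ may carry positive mass, and under your (mistaken) belief that $(dd^cw)^n$ carries none there, the contact-set contribution $\int_{\{u=v\}}(-\varphi)\left[(dd^cu)^n-(dd^cw)^n\right]$ would come out nonnegative --- the wrong sign for the inequality you need. What actually saves the statement is cancellation: all the relevant measures have the identical pluripolar part $1_{\{f=-\infty\}}(dd^cf)^n$, which is exactly why the paper routes every comparison on pluripolar sets through Proposition \ref{pro-1-1-1-1-1} instead of discarding them.
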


\begin{proof}
For $j\in\mathbb N^*$, put $v_j=\max(u ,v-\frac{1}{j})$.
Because  $u\leq  v_j\leq f$ in $\Omega$, we have $v_j\in\mathcal F^a(\Omega,f)$.  
By Lemma 3.5 in \cite{ACCH} we have
\begin{align*}
\frac{1}{n!}\int_{\Omega}(v_j-u)^n(dd^c \varphi)^n + \int_{\Omega}- \varphi(dd^cv_j)^n \leq \int_{\Omega}-\varphi (dd^c u)^n.
\end{align*}
By Theorem 4.1 in \cite{KH} we have $v_j=v-\frac{1}{j}$ in $\{u<v_j\}$. Hence,
\begin{align*}
&\frac{1}{n!}\int_{\{u <v_j\}}(v_j-u)^n(dd^c \varphi)^n +\int_{\{u <v_j\}} -\varphi (dd^cv)^n
\\&= \frac{1}{n!}\int_{\{u  <v_{j} \}}(v_j - u)^n(dd^c\varphi)^n + \int_{\{u < v_{j}\}} -\varphi (dd^cv_j)^n
\\&\leq \frac{1}{n!}\int_{\Omega}(v_j -u )^n(dd^c \varphi )^n +\int_{\{u< v_j\}} -\varphi (dd^cv_j)^n
\\&\leq \frac{1}{n!}\int_{\Omega}(v_j - u)^n(dd^c \varphi )^n+\int_{\Omega}- \varphi (dd^c v_j)^n
-\int_{\{u =v_j\}}-\varphi (dd^c v_j)^n
\\&\leq \int_{\Omega}- \varphi (dd^c u)^n -\int_{\{u\geq v\}}- \varphi (dd^c v_j )^n.
\end{align*}
Now, since $u=v_j$ in $\{u>v-\frac{1}{j}\}$ so by Theorem 4.1 in \cite{KH} imply that  
$$(dd^c u)^n=(dd^c v_j)^n \text{ in  }\{u\geq v\}\cap \{u>-\infty\}.$$
Moreover, by Proposition \ref{pro-1-1-1-1-1} we have
$$
1_{\{u =-\infty\}}(dd^c u)^n =1_{\{v_j=-\infty\}}(dd^cv_j)^n =1_{\{f=-\infty\}}(dd^c f)^n 
\text{ in }\Omega.$$ 
Hence, we obtain that 
$$(dd^c u)^n=(dd^c v_j)^n \text{ in  }\{u\geq v\}.$$
Therefore,  
\begin{align*}
&\frac{1}{n!}\int_{\{u <v_j\}}(v_j-u)^n (dd^c \varphi )^n+\int_{\{u<v_j\}} - \varphi (dd^c v)^n
\\&\leq   \int_{\Omega}- \varphi (dd^cu )^n-\int_{\{u \geq v\}}-\varphi (dd^c u)^n
\\&= \int_{\{u<v\}}- \varphi (dd^c u )^n .
\end{align*}
Let $j\to+\infty$ we obtain that
\begin{align*}
\frac{1}{n!}\int_{\{u <v\}}(v-u)^n(dd^c \varphi )^n +\int_{\{u <v\}}- \varphi (dd^cv)^n\leq \int_{\{u <v\}}-\varphi (dd^cu )^n.
\end{align*} 
The proof is complete.
\end{proof}

\begin{lemma}\label{le2}

Let  $\Omega$  be a bounded hyperconvex domain in $\mathbb C^n$  and
let $\{u_j\}\subset \mathcal E^a(\Omega)$  such that $u_j\geq u_1$ for
every $j\geq1$ and $u_j\to u_0$ in $C_{n}$-capacity in $\Omega$. 
Assume that  
$\{\varphi^k_j\}$, $k=1,2$ are  sequences  of  uniformly bounded
plurisubharmonic functions in $\Omega$ which converges weakly to a plurisubharmonic function  $\varphi^k_0$ in $\Omega$. Then
$\varphi_j^1 \varphi_j^2 (dd^c u_j)^n \to \varphi^1_0 \varphi^2_0(dd^c u_0)^n$ weakly as $j\to+\infty$.
\end{lemma}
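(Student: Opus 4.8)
The plan is to test the measures against an arbitrary $\chi\in C_0^\infty(\Omega)$ and to reduce the bilinear statement to a single, cleaner convergence. Since the $\varphi^k_j$ are uniformly bounded psh functions and the Monge--Amp\`ere operator here is applied only to the $u_j$ (the weights enter as $L^\infty$ multipliers), the product $\varphi^1_j\varphi^2_j(dd^cu_j)^n$ will converge weakly to $\varphi^1_0\varphi^2_0(dd^cu_0)^n$ as soon as two things are known: first, $(dd^cu_j)^n\to(dd^cu_0)^n$ weakly, which is the standard consequence of $u_j\to u_0$ in $C_n$-capacity for a sequence dominated by $u_1\in\mathcal E^a(\Omega)$; and second, that the weights themselves converge in $C_n$-capacity, $\varphi^k_j\to\varphi^k_0$. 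Indeed, once $u_j$ and both weights converge in capacity, the weighted multilinear continuity of the Monge--Amp\`ere operator along sequences converging in capacity (in the spirit of Xing and of Hiep \cite{H3}) handles the product directly, because convergence in capacity, unlike weak convergence, is stable under the product operation. Thus the entire lemma reduces to upgrading the hypothesis ``$\varphi^k_j\to\varphi^k_0$ weakly'' to ``$\varphi^k_j\to\varphi^k_0$ in $C_n$-capacity.''

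The ``upward'' half of this upgrade is automatic. Because the $\varphi^k_l$ are uniformly bounded psh and converge in $L^1_{loc}$, the sub-mean-value inequality gives $\limsup_l\varphi^k_l\le\varphi^k_0$ everywhere, while a subsequence converging a.e. gives $\limsup_l\varphi^k_l=\varphi^k_0$ almost everywhere; hence the decreasing envelopes $\Psi^k_j:=\big(\sup_{l\ge j}\varphi^k_l\big)^*$ satisfy $\Psi^k_j\searrow\big(\limsup_l\varphi^k_l\big)^*=\varphi^k_0$. A decreasing sequence of uniformly bounded psh functions converges in $C_n$-capacity, so $\Psi^k_j\to\varphi^k_0$ in capacity, and since $\varphi^k_j\le\Psi^k_j$ this controls $\varphi^k_j$ from above in capacity. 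What remains is to control the ``downward'' deviation, i.e.\ to show that for every $\delta>0$ and every compact $K\subset\Omega$ one has $\mathrm{cap}\big(\{\varphi^k_j<\varphi^k_0-\delta\}\cap K\big)\to0$; equivalently, since $\varphi^k_0\le\Psi^k_j$ gives $\{\varphi^k_j<\varphi^k_0-\delta\}\subset\{\Psi^k_j-\varphi^k_j>\delta\}$, that $\Psi^k_j-\varphi^k_j\to0$ in capacity.

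This downward estimate is the main obstacle. The mechanism I would exploit is that a uniformly bounded psh function cannot be depressed by $\delta$ on a set of non-negligible capacity without paying a proportional $L^1_{loc}$ price: a psh function has no interior ``valleys,'' so if $\varphi^k_j<\varphi^k_0-\delta$ on a set $E$ with $\mathrm{cap}(E\cap K)\ge\gamma$, comparison with the relative extremal function of $E$ forces $\int_{K'}(\varphi^k_0-\varphi^k_j)\,dV\ge c(\delta,\gamma)>0$ on a fixed neighborhood $K'$. Since $\varphi^k_j\to\varphi^k_0$ in $L^1_{loc}$, this deficit tends to $0$, which is incompatible with $\mathrm{cap}(E\cap K)$ staying bounded below; hence the deviation sets have capacity tending to $0$. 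Making this inequality quantitative is the delicate point of the argument.

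Granting the capacity convergence $\varphi^k_j\to\varphi^k_0$, I would close as follows. Writing $\int_\Omega\chi\,\varphi^1_j\varphi^2_j(dd^cu_j)^n$ and using $0\le\Psi^1_j\Psi^2_j-\varphi^1_j\varphi^2_j\le M\big[(\Psi^1_j-\varphi^1_j)+(\Psi^2_j-\varphi^2_j)\big]$ after normalizing the weights to lie in $[1,M]$ by adding a constant, the correction integrates to $0$ in the limit because the integrand is bounded and tends to $0$ off sets of vanishing capacity, against which the masses $(dd^cu_j)^n$ behave uniformly. The leading term $\int_\Omega\chi\,\Psi^1_j\Psi^2_j(dd^cu_j)^n$ then converges to $\int_\Omega\chi\,\varphi^1_0\varphi^2_0(dd^cu_0)^n$ by the weighted continuity under capacity convergence, and letting the normalizing constant be absorbed (the extra single-weight and constant terms are treated the same way, the constant term being exactly the weak convergence of $(dd^cu_j)^n$) completes the proof.
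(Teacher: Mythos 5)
Your proof hinges on upgrading the hypothesis ``$\varphi^k_j\to\varphi^k_0$ weakly'' to ``$\varphi^k_j\to\varphi^k_0$ in $C_n$-capacity,'' and this is a genuine gap, not just a technical one: the step you yourself flag as ``the delicate point'' (the downward estimate $C_n(\{\varphi^k_j<\varphi^k_0-\delta\}\cap K)\to 0$) cannot be carried out in general for $n\ge 2$. Weak convergence of a uniformly bounded sequence of plurisubharmonic functions yields convergence in $C_{n-1}$-capacity (this is due to Xing), but \emph{not} in $C_n$-capacity. Indeed, if your upgrade were true, then combining it with Xing's 1996 theorem (continuity of $(dd^c\cdot)^n$ on uniformly bounded sequences converging in $C_n$-capacity) would make the Monge--Amp\`ere operator sequentially continuous along uniformly bounded $L^1_{loc}$-convergent sequences, contradicting Cegrell's classical 1983 example of discontinuity of $(dd^c\cdot)^n$ under exactly such convergence. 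Your heuristic --- that depressing a bounded psh function by $\delta$ on a set of capacity $\ge\gamma$ forces an $L^1$ deficit bounded below --- is precisely what fails in top degree, where $C_n$-capacity and Lebesgue mass are not comparable in the required quantitative way. (A secondary soft spot: even granting the upgrade, your closing step asserts that the masses $(dd^cu_j)^n$ are uniformly small on sets of small capacity; for a sequence merely dominated below by $u_1\in\mathcal E^a(\Omega)$ this itself requires a Xing-type lemma and is not automatic.)

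The paper avoids all of this: it never touches the convergence mode of the weights. After normalizing $-1\le\varphi^k_j\le 0$, it uses the polarization identity
\begin{equation*}
\varphi^1_j\varphi^2_j=\frac{(\varphi^1_j+\varphi^2_j+2)^2+4}{2}-\frac{(\varphi^1_j+2)^2}{2}-\frac{(\varphi^2_j+2)^2}{2}=:\psi^1_j-\psi^2_j-\psi^3_j,
\end{equation*}
where each $\psi^k_j$ is again plurisubharmonic (square of a nonnegative psh function), uniformly bounded, and weakly convergent. This reduces the bilinear weight to three single psh weights, to which Theorem 3.4 of Xing \cite{Xing} applies directly --- and that theorem already tolerates weights converging merely weakly, requiring capacity convergence only of the potentials $u_j$. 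So the correct repair of your argument is not to upgrade the weights' convergence (which is impossible in general) but to invoke the single-weight result you were implicitly assuming in a stronger, capacity-convergence form; in its actual form it makes your entire first three paragraphs unnecessary.
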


\begin{proof}
Without loss of generality we can assume that $u_j\in\mathcal F^a(\Omega)$ and $-1\leq \varphi_j^k\leq 0$ in $\Omega$ for all $j\geq 0$, $k=1,2$. Put
$$\psi_j^1=\frac{(\varphi_j^1+ \varphi^2_j+2)^2 +4}{2},\ \psi^2_j=\frac{(\varphi_j^1+2)^2}{2} \text{ and } \psi^3_j=\frac{(\varphi^2_j+2)^2}{2}.$$
It is clear that $\psi_j^k\in PSH(\Omega)$, $0\leq \psi_j^k\leq 4$ and $\psi^k_j\to \psi^k_0$ weakly in $\Omega$ as $j\to+\infty$, $k=1,2,3$.
Since $\varphi_j^1\varphi_j^2=\psi^1_j -\psi^2_j-\psi^3_j$ in $\Omega$ we obtain by Theorem   3.4 in \cite{Xing} that 
\begin{align*}
\varphi_j^1 \varphi_j^2 (dd^c u_j)^n 
&=\psi^1_j (dd^c u_j)^n 
-\psi^2_j(dd^c u_j)^n 
-\psi^3_j (dd^c u_j)^n 
\\&\to \psi^1_0 (dd^c u_0)^n 
-\psi^2_0(dd^c u_0)^n 
-\psi^3_0 (dd^c u_0)^n 
=\varphi^1_0 \varphi^2_0(dd^c u_0)^n
\end{align*}
weakly in $\Omega$ as $j\to+\infty$. The proof is complete.
\end{proof}

\begin{proof}[Proof of the main theorem] 
Without loss of generality we can assume that $f<0$ and $-1\leq \rho\leq 0$ in $\Omega$.

(a)$\Rightarrow$(b).  Fix $a>0$. Put
$$\varphi_j:=\max\left(\frac{u_j}{a},\rho\right).$$
Because 
\begin{align*}
0\leq \sup_j \int_\Omega -\varphi_j (dd^c u_j)^n 
\leq \int_\Omega -\rho (dd^c w)^n<+ \infty,
\end{align*}
it remains to prove that there exists a subsequence $\{u_{j_k}\}$ of sequence $\{u_j\}$ such that  
$$
\lim_{k\to+\infty} \int_{\Omega} \varphi_{j_k} (dd^c u_{j_k})^n= \int_{\Omega} \varphi_0 (dd^c u_0)^n.
$$

First we claim that 
there exists an increasing sequence $\{j_k\}\subset\mathbb N^*$ such that
\begin{equation}\label{eq0001}
\lim_{k\to+\infty}  \int_{\Omega} \varphi_{j_k}  \max\left(1+\frac{u_{j_k}}{k},0\right) (dd^c u_{j_k})^n
=\int_{\{u_0>-\infty\}}  \varphi_0 (dd^c u_0)^n.
\end{equation}
Indeed, let  $\chi_k\in\mathcal C^\infty_0(\Omega)$ such that $0\leq \chi_k\leq \chi_{k+1}\leq 1$ in $\Omega$, $\{\rho\leq -\frac{1}{k}\}\Subset \{\chi_k=1\}$ and
$$\int_{\{\chi_k<1\}} (-\rho) (dd^c u_0)^n \leq \frac{1}{k}.$$  
Since $u_j\to u_0$ in $C_n$-capacity in $\Omega$ as $j\to+\infty$, so
$\max(u_j,-k)\to \max(u_0,-k)$ in $C_n$-capacity as $j\to+\infty$. By
Lemma \ref{le2}  we have
\begin{align*}
 \varphi_j  \max\left(1+\frac{u_{j}}{k},0\right) (dd^c \max(u_j,-k))^n
\to  \varphi_0 \max\left(1+\frac{u_{0}}{k},0\right) (dd^c \max(u_0,-k))^n
\end{align*}
weakly in $\Omega$ as $j\to+\infty$. Hence, by
Theorem 4.1 in \cite{KH} we get
\begin{align*}
&\lim_{j\to+\infty} \int_\Omega  \chi _k \varphi_j  \max\left(1+\frac{u_{j}}{k},0\right) (dd^c u_j)^n
\\&=\lim_{j\to+\infty} \int_\Omega \chi _k \varphi_j \max\left(1+\frac{u_{j}}{k},0\right) (dd^c \max(u_j,-k))^n
\\&= \int_\Omega \chi_k  \varphi_0  \max\left(1+\frac{u_{0}}{k},0\right) (dd^c \max(u_0,-k))^n
\\&= \int_\Omega \chi _k \varphi_0  \max\left(1+\frac{u_{0}}{k},0\right) (dd^c u_0)^n.
\end{align*} 
Because  $\chi_k\max\left(1+\frac{u_{0}}{k},0\right) \nearrow 1_{\{u_0>-\infty\}}$ as $k\to+\infty$ in $\Omega$,  
we have
\begin{align*}
\lim_{k\to+\infty}\int_\Omega \chi _k \varphi_0\max\left(1+\frac{u_{0}}{k},0\right)  (dd^c u_0)^n
=\int_{\{u_0>-\infty\}}  \varphi_0 (dd^c u_0)^n.
\end{align*}
Therefore,   there exists an increasing sequence $\{j_k\}\subset\mathbb N^*$ such that
\begin{equation}\label{eq--0001}
\lim_{k\to+\infty}  \int_{\Omega}\chi _k \varphi_{j_k}  \max\left(1+\frac{u_{j_k}}{k},0\right) (dd^c u_{j_k})^n
=\int_{\{u_0>-\infty\}}   \varphi_0 (dd^c u_0)^n.
\end{equation} 
Now, fix $k_0\in \mathbb N^*$. By  
the proof of the theorem in \cite{Ce3} (see (3.1) in \cite{Ce3})
we have
\begin{equation}\label{eq--0001=1}
\begin{split}
&\liminf_{k\to+\infty}  \int_{\Omega} (1-\chi_k) \varphi_{j_k}  \max\left(1+\frac{u_{j_k}}{k},0\right)  (dd^c u_{j_k})^n
\\&\geq \liminf_{k\to+\infty}\int_{\Omega} (1-\chi_{k})\varphi_{j_k}  (dd^c u_{j_k})^n
\geq\liminf_{k\to+\infty} \int_{\Omega} (1-\chi_{k_0})\rho  (dd^c u_{j_k})^n
\\&=\liminf_{k\to+\infty}\left [\int_{\Omega} \rho  (dd^c u_{j_k})^n
-\int_{\Omega} \chi_{k_0}\rho  (dd^c u_{j_k})^n \right ]
\\& =\int_{\Omega} \rho  (dd^c u_0)^n
-\int_{\Omega} \chi_{k_0}\rho  (dd^c u_0)^n
\\&\geq \int_{\{\chi_{k_0}<1\}} \rho (dd^c u_0)^n \geq -\frac{1}{k_0}.
\end{split}
\end{equation}
Combining this with \eqref{eq--0001}  we arrive at 
\begin{align*}
&\lim_{k\to+\infty}  \int_{\Omega} \varphi_{j_k} \max\left(1+\frac{u_{j_k}}{k},0\right) (dd^c u_{j_k})^n\\&
=\lim_{k\to+\infty}  \int_{\Omega} \chi_k \varphi_{j_k}  \max\left(1+\frac{u_{j_k}}{k},0\right) (dd^c u_{j_k})^n
\\&
=\int_{\{u_0>-\infty \}}  \varphi_0 (dd^c u_0)^n.
\end{align*}
This proves the claim. 

The measure $1_{\{u_{j_k}>-\infty\}} \varphi_{j_k} \max \left( \frac{u_{j_k}}{k},-1\right) (dd^c u_{j_k})^n$ vanishes on all pluripolar subset of $\Omega$, hence by Lemma 5.14 in \cite{Ce04}  there exists
$h_{k}\in\mathcal F^a(\Omega)$ such that
$$(dd^c h_k)^n =1_{\{u_{j_k}>-\infty\}} \varphi_{j_k} \max \left( \frac{u_{j_k}}{k},-1\right) (dd^c u_{j_k})^n.$$
Because  $(dd^c h_k)^n \leq (dd^c u_{j_k})^n$ in $\Omega$ and the measure  $(dd^c h_k)^n$ vanishes on all pluripolar subset of $\Omega$, from  Corollary 3.2 in \cite{ACCH} we have  
$$h_k\geq u_{j_k}\geq w\text{ in }\Omega.$$

We claim that $h_k\to 0$ in $C_n$-capacity in $\Omega$. Indeed,  let $\delta>0$ and $\psi \in PSH(\Omega)$ with $-1\leq \psi \leq 0$.
By Theorem 3.1 in \cite{ACCH} we have
\begin{align*}
\int_{\{h_k<-\delta\}} (dd^c\psi )^n
&\leq   \int_{\{h_k<\delta \psi \}} (dd^c  \psi )^n
\leq \frac{1}{\delta^n} \int_{\{h_k<\delta \psi \}} (dd^c h_k)^n
\\&\leq \frac{1}{\delta^n} \int_{\{u_{j_k}>-\infty\} } \varphi_{j_k}  \max \left( \frac{u_{j_k}}{k},-1\right)  (dd^c u_{j_k})^n
\\&\leq -\frac{1}{\delta^n} \int_{\{u_{j_k}>-\infty\} }   \max \left( \frac{u_{j_k}}{k},\rho \right) (dd^c u_{j_k})^n.
\end{align*}
Therefore, by Lemma 3.3 in \cite{ACCH} and Proposition \ref{pro-1-1-1-1-1} we obtain that 
\begin{align*}
\int_{\{h_k<-\delta\}} (dd^c\psi )^n
&\leq -\frac{1}{\delta^n} \int_{\Omega }   \max \left( \frac{u_{j_k}}{k},\rho \right) (dd^c u_{j_k})^n 
+ \frac{1}{\delta^n} \int_{\{u_{j_k}=-\infty\}}\rho (dd^c u_{j_k})^n
\\&\leq -\frac{1}{\delta^n} \int_{\Omega }  \max \left( \frac{w}{k},\rho \right) (dd^c w)^n 
+ \frac{1}{\delta^n} \int_{\{w=-\infty\}}\rho (dd^c w)^n 
\\&= -\frac{1}{\delta^n} \int_{\{w>-\infty\}}  \max \left( \frac{w}{k},\rho \right) (dd^c w)^n .
\end{align*}
It follows that
\begin{align*}
 C_n(\{h_k<-\delta\})
 \leq  -\frac{1}{\delta^n}  \int_{\{w>-\infty\}} \max \left( \frac{w}{k},\rho \right) (dd^c w)^n.
\end{align*}
Hence, we get
$$\lim_{k\to+\infty} C_n(\{h_k<-\delta\})=0,$$
for every $\delta>0$. 
Thus, $h_k\to 0$ in $C_n$-capacity in $\Omega$ as $k\to+\infty$. This proves the claim, and therefore,  by \eqref{eq--0001=1} and  the Theorem in  \cite{Ce3} we have 
\begin{align*}
0&\leq \limsup_{k\to+\infty} \int_{ \{u_{j_k} >-\infty\}}  \varphi_{j_k}  \max \left( \frac{u_{j_k}}{k},-1 \right)  (dd^c u_{j_k})^n  
\\&\leq \limsup_{k\to+\infty} \int_{ \Omega} (1-\chi_{k_0}) (-\rho) (dd^c u_{j_k})^n  
+  \limsup_{k\to+\infty} \int_{\Omega} \chi_{k_0}  (dd^c h_{k})^n
\\& \leq \frac{1}{k_0},
\end{align*}
for all $k_0\in\mathbb N^*$.
Thus,
$$
\lim_{k\to+\infty} \int_{ \{u_{j_k} >-\infty\}}  \varphi_{j_k}\max \left( \frac{u_{j_k}}{k},-1 \right) (dd^c u_{j_k})^n=0.
$$
Combining this with \eqref{eq0001}  we arrive at
$$\lim_{k\to+\infty} \int_{\{u_{j_k}>-\infty\}} \varphi_{j_k} (dd^c u_{j_k})^n= \int_{\{u_0>-\infty\}} \varphi_0 (dd^c u_0)^n.$$ 
Moreover, by Proposition \ref{pro-1-1-1-1-1}, we have 
\begin{align*}  \int_{\{u_{j_k}=-\infty\}} \varphi_{j_k} (dd^c u_{j_k})^n
=  \int_{\{f=-\infty\}} \rho (dd^c f)^n
= \int_{\{u_0=-\infty\}} \varphi_0 (dd^c u_0)^n.
\end{align*}
Hence, we get
$$\lim_{k\to+\infty} \int_{\Omega} \varphi_{j_k} (dd^c u_{j_k})^n= \int_{\Omega} \varphi_0 (dd^c u_0)^n.$$ 

(b)$\Rightarrow$(c). Fix $a>0$.
Since $u_j\to u_0$ a.e. in $\Omega$ as $j\to+\infty$ so $v_j\searrow u_0$ as $j\nearrow +\infty$. Hence, $v_j\to u_0$ in $C_n$-capacity in $\Omega$. Therefore, by the proof of (a)$\Rightarrow$(b) and Lemma 3.3 in \cite{ACCH}, we have 
\begin{align*}
\int_{\Omega} \max\left (\frac{u_0}{a} , \rho \right ) (dd^c u_{0})^n
&=\lim_{j\to+\infty} \int_{\Omega} \max\left (\frac{v_j}{a} , \rho \right ) (dd^c v_{j})^n    
\\&\geq \lim_{j\to+\infty} \int_{\Omega} \max\left (\frac{v_j}{a} , \rho \right ) (dd^c u_{j})^n 
\\&\geq \lim_{j\to+\infty} \int_{\Omega} \max\left (\frac{u_j}{a} , \rho \right ) (dd^c u_{j})^n 
\\&= \int_{\Omega} \max\left (\frac{u_0}{a} , \rho \right ) (dd^c u_{0})^n.   
\end{align*}
It follows that 
$$\lim_{j\to+\infty} \int_{\Omega} \max\left (\frac{v_j}{a} , \rho \right ) (dd^c u_{j})^n 
= \int_{\Omega} \max\left (\frac{u_0}{a} , \rho \right ) (dd^c u_{0})^n.   $$
Therefore, we obtain that
\begin{align*}
\lim_{j\to+\infty} \int_{\Omega} \left[\max\left(\frac{v_j}{a},\rho\right) -\max\left(\frac{u_j}{a},\rho\right) \right] (dd^c u_j)^n
=0.
\end{align*}

(c)$\Rightarrow$(a).  Because $v_j \searrow u_0$ in $\Omega$ as $j\nearrow +\infty$, we get $v_j\to u_0$ in $C_n$-capacity in $\Omega$. Hence, it is sufficient to prove that $v_j-u_j\to 0$ in $C_n$-capacity in $\Omega$. Let $K$ be a compact subset of $\Omega$ and let $\varepsilon, \delta>0$. Without loss of generality we can assume that 
  $K\Subset \{\rho= -1\}$.  
Choose $\chi\in\mathcal C^\infty_0(\Omega)$ and  $a>b>1$  such that $0\leq \chi\leq 1$,   $\{\rho \leq -\varepsilon\}\Subset \{\chi=1\}$,
 $\{\chi\neq 0\} \subset \{a\rho<-b \}$ and 
\begin{equation}\label{eq06}
\frac{a}{b} \int_{\{w>-\infty\}} -\max \left (\frac{w}{a},  \rho \right ) (dd^c w)^n <\varepsilon .
\end{equation}
Let  $\psi_j \in \mathcal E_0(\Omega)$ with  $\psi_j\geq \rho$ such that
\begin{equation}\label{eq03}
C_n(K\cap \{v_{j}-u_{j}>2\delta\})
<\int_{K\cap  \{v_{j}-u_{j}>2\delta\} } (dd^c \psi_j)^n+ \varepsilon.
\end{equation}
Note that  $u_j\leq v_j$ in $\Omega$ for all $j\geq1$. From the hypotheses we have
\begin{align*}
0& \leq  \limsup_{j\to+\infty} \int_{\{u_{j}<v_j-\delta\}\cap \{u_j >-b\}}  \chi  (dd^c u_{j})^n
\\& \leq  \frac{1}{\delta} \limsup_{j\to+\infty}  \int_{\{u_{j}<v_j-\delta\}\cap \{u_j >-b\}} \chi (v_j-u_{j})  (dd^c u_{j})^n
\\& \leq \frac{a}{\delta} \limsup_{j\to+\infty} \int_{ \Omega}  \left [ \max\left (\frac{v_j}{a} ,\rho  \right ) - \max\left (\frac{u_j}{a} ,\rho  \right ) \right]  (dd^c u_{j})^n 
\\& 
=0.
\end{align*}
It follow that
$$\lim_{j\to+\infty} \int_{\{u_{j}<v_j-\delta\}\cap \{u_j>-b\}}  \chi  (dd^c u_{j})^n=0.$$
By Lemma 3.3 in \cite{ACCH} and Proposition \ref{pro-1-1-1-1-1}   we  have
\begin{align*}
& \limsup_{j\to+\infty} \int_{\{u_{j}<v_j-\delta\}\cap \{u_j>- \infty \}} \chi  (dd^c u_{j})^n
\\&=  \limsup_{j\to+\infty} \int_{\{u_{j}<v_j-\delta\}\cap \{-\infty< u_j\leq-b\}} \chi  (dd^c u_{j})^n 
\\&\leq    \limsup_{j\to+\infty} \int_{\{u_j>-\infty \}} - \max\left (\frac{u_j}{b} ,  \frac{a \rho}{b}  \right ) (dd^c u_{j})^n 
\\&\leq  \frac{a}{b} \limsup_{j\to+\infty} \int_{\Omega} -\max\left (\frac{u_j}{a} , \rho \right ) (dd^c u_{j})^n  +  \frac{a}{b} \int_{\{u_j=-\infty\}} \max\left (\frac{u_j}{a} ,  \rho \right )  (dd^cu_j)^n 
\\&\leq \frac{a}{b}  \int_{\Omega} -\max\left (\frac{w}{a} , \rho \right ) (dd^c w)^n  +  \frac{a}{b} \int_{\{w=-\infty\}} \max\left (\frac{w}{a} ,  \rho \right )  (dd^c w)^n 
\\&= \frac{a}{b}  \int_{\{w>-\infty\}} -\max \left (\frac{w}{a}, \rho \right ) (dd^c w)^n.
\end{align*}  
Therefore, by \eqref{eq06}  we get
\begin{equation}\label{eq04}
 \limsup_{j\to+\infty} \int_{\{u_{j}<v_j-\delta\}\cap \{u_j>-\infty\}} \chi  (dd^c u_{j})^n<\varepsilon.
\end{equation}
Now, by Proposition \ref{pro-1-1-1-1-1} and  Lemma \ref{le1} we have    
\begin{align*}
\int_{K\cap  \{v_{j}-u_{j}>2\delta\}} (dd^c \psi_j)^n
& \leq \int_{\{u_{j}<v_{j} -2\delta\}}(dd^c\psi_j)^n
\\&\leq \frac{1}{\delta^n}\int_{\{u_{j}< v_{j}-2\delta\}} (v_{j}- \delta -u_{j})^n (dd^c \psi _j )^n
\\&\leq \frac{1}{\delta^n}\int_{\{u_{j}<v_{j}-\delta\}} (v_{j}- \delta -u_{j})^n  (dd^c \psi _j)^n
\\& \leq \frac{n!}{\delta^n}\int_{\{u_{j} <v_{j}-\delta\}\cap \{u_j>-\infty\}} -\psi_j (dd^c u_{j})^n.
\end{align*}
Hence, from \eqref{eq04} we obtain that 
\begin{align*}
\limsup_{j\to+\infty} \int_{K\cap  \{v_{j}-u_{j}>2\delta\}} (dd^c \psi_j)^n
& \leq \frac{n!}{\delta^n} \limsup_{j\to+\infty}   \int_{\{u_{j} <v_{j}-\delta\}\cap \{u_j>-\infty\}} -\psi_j (dd^c u_{j})^n
\\& \leq \frac{n!}{\delta^n} \limsup_{j\to+\infty}   \int_{\Omega} -\psi_j (1-\chi) (dd^c u_{j})^n
\\& + \frac{n!}{\delta^n} \limsup_{j\to+\infty}   \int_{\{u_{j} <v_{j}-\delta\}\cap \{u_j>-\infty\}} \chi (dd^c u_{j})^n
\\& \leq \frac{n!}{\delta^n}  \limsup_{j\to+\infty}  \int_{\{\rho>-\varepsilon\}} -\rho   (dd^c u_{j})^n + \frac{n! \varepsilon}{\delta^n}
\\& \leq \frac{n!}{\delta^n} \limsup_{j\to+\infty}   \int_{\Omega} -\max(\rho,-\varepsilon)   (dd^c u_{j})^n + \frac{n! \varepsilon}{\delta^n}
\\& \leq \frac{n!}{\delta^n}  \int_{\Omega} -\max(\rho,-\varepsilon)   (dd^c w)^n + \frac{n! \varepsilon}{\delta^n}.
\end{align*}
Combining this with    \eqref{eq03} we get
\begin{align*}
\limsup_{j\to+\infty} C_n(\{K\cap  \{v_{j}-u_{j}>2\delta\}) 
\leq \frac{n!}{\delta^n}  \int_{\Omega} -\max(\rho,-\varepsilon)   (dd^c w)^n + \left( \frac{n! }{\delta^n} + 1 \right)\varepsilon.
\end{align*}
Let $\varepsilon \searrow 0$ we obtain that
\begin{align*}
\lim_{j\to+\infty} C_n(\{K\cap  \{v_{j}-u_{j}>2\delta\}) =0.
\end{align*}
Thus, $v_j-u_j\to 0$ in $C_n$-capacity in $\Omega$.
The proof is complete.
\end{proof}

\section{Application}
In this section, we prove a generalization of Cegrell and Ko\l odziej's stability theorem from \cite{CK}.
 First, we need the following. 

\begin{lemma}\label{le4.1111}
Let  $\Omega$  be a  bounded
hyperconvex domain in $\mathbb C^n$ and let $f\in\mathcal
E(\Omega)$, $w\in\mathcal N^a(\Omega,f)$ such that  $\int_{\Omega}(-\rho) (dd^c w)^n<+\infty$  for some $\rho\in\mathcal E_0(\Omega)$.  
Then for every   
nonnegative Borel measures $\mu$ in $\Omega$ such that
$$(dd^c f)^n \leq \mu\leq (dd^c w)^n,$$
there exists a unique  $u\in \mathcal N^a(\Omega,f)$ such that  $u\geq w$ and  $(dd^c u)^n=\mu$ in $\Omega$.
\end{lemma}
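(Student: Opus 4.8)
The plan is to establish uniqueness first and then existence by a monotone approximation scheme, using Proposition \ref{Hong} as the comparison engine throughout. For uniqueness, suppose $u_1,u_2\in\mathcal N^a(\Omega,f)$ both satisfy $u_i\ge w$ and $(dd^cu_i)^n=\mu$. Since $\int_\Omega(-\rho)(dd^cu_i)^n=\int_\Omega(-\rho)\,d\mu\le\int_\Omega(-\rho)(dd^cw)^n<+\infty$, each $u_i$ meets the hypotheses of Proposition \ref{Hong}. Applying that proposition with $(u,v)=(u_1,u_2)$ (note $u_2\le f$ and $(dd^cu_2)^n=(dd^cu_1)^n$) gives $u_2\le u_1$, and the symmetric choice gives $u_1\le u_2$; hence $u_1=u_2$.

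For existence I would first record the structure of $\mu$ on pluripolar sets. Squeezing $(dd^cf)^n\le\mu\le(dd^cw)^n$ together with Proposition \ref{pro-1-1-1-1-1}, which identifies the pluripolar parts of both $(dd^cf)^n$ and $(dd^cw)^n$ with $1_{\{f=-\infty\}}(dd^cf)^n$, forces the pluripolar part of $\mu$ to equal $1_{\{f=-\infty\}}(dd^cf)^n$. Next I would truncate by setting $\mu_j:=1_{\{\rho<-1/j\}}\mu$. Because $-\rho\ge 1/j$ on the support of $\mu_j$, the weighted bound yields the finite total mass $\mu_j(\Omega)\le j\int_\Omega(-\rho)\,d\mu\le j\int_\Omega(-\rho)(dd^cw)^n<+\infty$, and clearly $\mu_j\nearrow\mu$ as $j\to+\infty$, with each $\mu_j\le(dd^cw)^n$ and compactly supported in $\Omega$.

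The heart of the argument is solving $(dd^cu_j)^n=\mu_j$ with $u_j\in\mathcal N^a(\Omega,f)$. For this I would invoke the solvability of the Dirichlet-type problem for a finite measure dominated by the Monge-Amp\`ere mass of a function in $\mathcal N^a(\Omega,f)$ (the Cegrell-type existence results underlying Lemma 5.14 in \cite{Ce04}, combined with Proposition \ref{pro-1-1-1-1-1} to absorb the pluripolar part carried by $\{f=-\infty\}$). Granting $u_j$, Proposition \ref{Hong} handles the rest of the bookkeeping: since $(dd^cw)^n\ge\mu_j=(dd^cu_j)^n$ and $w\le f$, it gives $w\le u_j$; and since $\mu_j\le\mu_{j+1}$, comparing $u_j$ with $u_{j+1}$ gives $u_{j+1}\le u_j$. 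Thus $\{u_j\}$ decreases and stays above $w$, so $u:=\lim_j u_j\ge w$ satisfies $\psi+f\le w\le u\le f$ for the function $\psi\in\mathcal N^a(\Omega)$ provided by $w$, whence $u\in\mathcal N^a(\Omega,f)$.

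Finally I would pass to the limit. As $u_j\searrow u$ in $\mathcal E(\Omega)$, with $u\not\equiv-\infty$ because $u\ge w$, the continuity of the Monge-Amp\`ere operator along decreasing sequences gives $(dd^cu_j)^n\to(dd^cu)^n$ weakly, while monotone convergence gives $\mu_j\to\mu$ weakly; by uniqueness of weak-$*$ limits $(dd^cu)^n=\mu$, including its pluripolar part. This produces the desired $u$, and uniqueness is already in hand. The main obstacle I anticipate is precisely the solvability step for $\mu_j$: one must build a solution with the prescribed boundary behaviour $f$ while correctly reproducing the pluripolar part $1_{\{f=-\infty\}}(dd^cf)^n$ and ensuring no Monge-Amp\`ere mass escapes to $\partial\Omega$ in the limit. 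The finite weighted energy $\int_\Omega(-\rho)(dd^cw)^n<+\infty$ is exactly what keeps the truncations $\mu_j$ of finite mass and lets Proposition \ref{Hong} be applied uniformly in $j$.
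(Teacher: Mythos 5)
Your overall skeleton is the same as the paper's: uniqueness via Proposition \ref{Hong}, a monotone family of approximate solutions squeezed between $w$ and $f$ (with Proposition \ref{Hong} giving both $u_j\geq w$ and $u_{j+1}\leq u_j$), membership $u\in\mathcal N^a(\Omega,f)$ from $w\leq u\leq f$, and the decreasing-limit continuity of $(dd^c\cdot)^n$ to identify the limit measure. All of that bookkeeping is correct. The genuine gap is at the existence step for the approximants, which is the actual content of the lemma, and your specific truncation makes it worse than merely unproved: it is inconsistent. Indeed, any candidate $u_j\in\mathcal N^a(\Omega,f)$ with $(dd^cu_j)^n=\mu_j\leq(dd^cw)^n$ satisfies $\int_\Omega(-\rho)(dd^cu_j)^n<+\infty$, so Proposition \ref{pro-1-1-1-1-1} forces $1_{\{u_j=-\infty\}}(dd^cu_j)^n=1_{\{f=-\infty\}}(dd^cf)^n$, and since $1_{\{u_j>-\infty\}}(dd^cu_j)^n$ vanishes on pluripolar sets, the pluripolar part of $\mu_j$ would have to be \emph{all} of $1_{\{f=-\infty\}}(dd^cf)^n$. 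But $\mu_j=1_{\{\rho<-1/j\}}\mu$ carries only $1_{\{\rho<-1/j\}}1_{\{f=-\infty\}}(dd^cf)^n$; whenever $(dd^cf)^n$ charges $\{f=-\infty\}\cap\{\rho\geq-1/j\}$, your intermediate Dirichlet problem has no solution in the class in which you then apply Proposition \ref{Hong}.

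One could repair the truncation by cutting off only the non-pluripolar part, say $\mu_j:=1_{\{\rho<-1/j\}\cap\{w>-\infty\}}\mu+1_{\{f=-\infty\}}(dd^cf)^n$, but then two things happen: the finite-total-mass rationale for the $\rho$-truncation evaporates (the pluripolar part may have infinite mass), and, more importantly, Lemma 5.14 of \cite{Ce04} still does not deliver the solvability you "grant" — it produces functions in $\mathcal F^a(\Omega)$ (zero boundary values, measures vanishing on pluripolar sets), not elements of $\mathcal N^a(\Omega,f)$ whose Monge--Amp\`ere measure reproduces the pluripolar part $1_{\{f=-\infty\}}(dd^cf)^n$. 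The paper closes exactly this hole by a different device: it exhausts $\Omega$ by hyperconvex subdomains $\Omega_j\Subset\Omega_{j+1}$ and invokes Proposition 5.1 of \cite{HTH} to find $u_j\in\mathcal N^a(\Omega_j,f)$ with $(dd^cu_j)^n=1_{\Omega_j\cap\{w>-\infty\}}\mu+1_{\Omega_j\cap\{f=-\infty\}}(dd^cf)^n=\mu$ on $\Omega_j$ — no truncation of the measure inside $\Omega_j$, hence no pluripolar mismatch — and then runs precisely the Proposition \ref{Hong} monotonicity and decreasing-limit argument you describe. You candidly flagged the solvability step as your main obstacle; it is not a technicality but the heart of the lemma, and without a subsolution-type existence theorem such as \cite{HTH} Proposition 5.1 (applied on an exhaustion, not to a globally truncated measure) the proposal does not prove the statement.
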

\begin{proof} The uniqueness imply from Proposition \ref{Hong}.  From the hypotheses and Proposition \ref{pro-1-1-1-1-1} we have 
$$1_{ \{w=-\infty\}} \mu 
=1_{ \{f=-\infty\}} (dd^c f)^n  \text{ in } \Omega.$$
Let $\{\Omega_j\}$ be a sequence of bounded hyperconvex domains such that $\Omega_j \Subset\Omega_{j+1}\Subset \Omega$ and $ \Omega =\bigcup_{j=1}^{+\infty} \Omega_j$. Because the measure $1_{ \{w>-\infty\}} \mu $ vanishes on all pluripolar subsets of $\Omega$, applying  Proposition 5.1 in \cite{HTH} we see that there are  $u_j \in \mathcal N^a(\Omega_j,f)$ such that  
$$(dd^c u_j)^n = 1_{\Omega_j \cap  \{w>-\infty\}} \mu 
+1_{ \Omega_j \cap  \{f=-\infty\}} (dd^c f)^n
 = \mu \text{ in } \Omega_j .$$
By Proposition \ref{Hong} we have $w\leq u_{j+1} \leq u_j \leq f$  on $ \Omega_j$. 
Put $u:=\lim_{j\to+\infty} u_j$. Then $w\leq u\leq f$ and $(dd^c u)^n =\mu$ in $\Omega$.
Moreover, since $w\in \mathcal N^a(\Omega,f)$, we get 
$u \in \mathcal N^a(\Omega,f)$. The proof is complete. 
\end{proof}

\begin{proposition}\label{pro4.1}
Let  $\Omega$  be a bounded
hyperconvex domain in $\mathbb C^n$ and let $f\in\mathcal
E(\Omega)$. Assume that $w\in\mathcal N^a(\Omega,f)$ such that  $\int_{\Omega}(-\rho) (dd^c w)^n<+\infty$  for some $\rho\in\mathcal E_0(\Omega)$. Then for every  sequence of
nonnegative Borel measures $\{\mu_j\}$ that  converges weakly to a non-negative Borel measure $\mu_0$ in $\Omega$ and satisfies 
$$(dd^c f)^n \leq \mu_j\leq (dd^c w)^n \text{ for all }j\geq 0,$$
there exist unique  $u_j\in \mathcal N^a(\Omega,f)$ such that  $u_j\geq w$, $(dd^c u_j)^n=\mu_j$ and $u_j\to u_0$ in $C_n$-capacity in $\Omega$.
\end{proposition}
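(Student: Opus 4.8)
The plan is to deduce the statement from the main theorem, using Lemma~\ref{le4.1111} to produce the solutions and to reduce everything to a convergence assertion. First I would apply Lemma~\ref{le4.1111} to each $j\ge 0$: since $(dd^cf)^n\le\mu_j\le(dd^cw)^n$, it yields a unique $u_j\in\mathcal N^a(\Omega,f)$ with $u_j\ge w$ and $(dd^cu_j)^n=\mu_j$; moreover $w\le u_j\le f$, and because $\mu_j\le(dd^cw)^n$ the weighted masses are uniformly controlled, $\int_\Omega(-\rho)(dd^cu_j)^n=\int_\Omega(-\rho)\,d\mu_j\le\int_\Omega(-\rho)(dd^cw)^n<+\infty$. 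Thus $\{u_j\}$ satisfies every hypothesis of the main theorem except, a priori, the almost everywhere convergence $u_j\to u_0$. Establishing that convergence and then verifying condition (b) would give (a), i.e.\ convergence in $C_n$-capacity, which is exactly the claim.

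To obtain the almost everywhere convergence I would argue by compactness and uniqueness. The functions $u_j$ are trapped between the fixed $w$ and $f$, so $\{u_j\}$ is relatively compact in $L^1_{loc}(\Omega)$, and it suffices to show that every subsequential limit equals $u_0$. If $u_{j_k}\to v$ in $L^1_{loc}$, then $w\le v\le f$, so $v\in\mathcal N^a(\Omega,f)$ with $v\ge w$, and by the uniqueness in Lemma~\ref{le4.1111} (that is, Proposition~\ref{Hong}) it is enough to check $(dd^cv)^n=\mu_0$. Here the hypothesis is doing the essential work: $(dd^cu_{j_k})^n=\mu_{j_k}\to\mu_0$ weakly is given, so the point is the weak continuity of the Monge--Amp\`ere operator along $u_{j_k}\to v$. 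This fails for general plurisubharmonic convergence, but the uniform domination $(dd^cu_{j_k})^n\le(dd^cw)^n$ by a measure that, since $w\in\mathcal N^a$, carries no mass on pluripolar sets, together with $\int_\Omega(-\rho)(dd^cw)^n<+\infty$ and Proposition~\ref{pro-1-1-1-1-1} pinning the pluripolar parts to $\{f=-\infty\}$, is what prevents mass from escaping to pluripolar or boundary sets; this is what I expect to force $(dd^cv)^n=\mu_0$, hence $v=u_0$.

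Granting $u_j\to u_0$ almost everywhere, the envelopes $v_j:=(\sup_{k\ge j}u_k)^*$ decrease to $u_0$, so for this monotone sequence (a) holds automatically and the implication (a)$\Rightarrow$(b) of the main theorem, applied to $\{v_j\}$, gives $\int_\Omega\max(v_j/a,\rho)(dd^cv_j)^n\to\int_\Omega\max(u_0/a,\rho)(dd^cu_0)^n$. Combining this with Lemma~3.3 of \cite{ACCH} (which, as $v_j\ge u_j$ and $\max(v_j/a,\rho)\le 0$, gives $\int_\Omega(-\max(v_j/a,\rho))(dd^cv_j)^n\le\int_\Omega(-\max(v_j/a,\rho))(dd^cu_j)^n$) and with $\max(v_j/a,\rho)\ge\max(u_j/a,\rho)$ yields at once the upper estimate $\limsup_j\int_\Omega\max(u_j/a,\rho)(dd^cu_j)^n\le\int_\Omega\max(u_0/a,\rho)(dd^cu_0)^n$. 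The matching lower estimate is the hard part, and I would extract it from the weak convergence $\mu_j\to\mu_0$ against the fixed quasicontinuous integrand $g_0:=\max(u_0/a,\rho)$, writing $\int_\Omega\max(u_j/a,\rho)\,d\mu_j\ge\int_\Omega g_0\,d\mu_j-\int_\Omega(g_0-\max(u_j/a,\rho))^+\,d\mu_j$ and controlling both terms through the uniform integrability furnished by $\mu_j\le(dd^cw)^n$ and $\int_\Omega(-\rho)(dd^cw)^n<+\infty$. Together these give condition (b), and (b)$\Rightarrow$(a) finishes the proof. The main obstacle throughout is precisely this passage from weak convergence of the dominated measures $\mu_j$ to genuine integral convergence, where the two-sided bound $(dd^cf)^n\le\mu_j\le(dd^cw)^n$ is indispensable.
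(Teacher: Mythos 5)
Your skeleton (Lemma~\ref{le4.1111} for existence and uniqueness, the uniform bounds $w\leq u_j\leq f$ and $\int_\Omega(-\rho)\,d\mu_j\leq\int_\Omega(-\rho)(dd^cw)^n$, compactness in $L^1_{loc}$, identification of cluster points, then the main theorem) coincides with the paper's, but at the decisive step your argument is an expectation rather than a proof. To show that a cluster point $v$ of $\{u_{j_k}\}$ satisfies $(dd^cv)^n=\mu_0$ you write that the domination $(dd^cu_{j_k})^n\leq(dd^cw)^n$ ``is what I expect to force'' weak continuity of the Monge--Amp\`ere operator along $u_{j_k}\to v$; no mechanism is given, and since weak continuity fails in general this is precisely where the work lies. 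The paper proceeds in the opposite order: it does \emph{not} first identify the limit measure. Instead it verifies condition (c) of the main theorem directly for the subsequence $u_{j_k}\to v$ a.e.\ (condition (c) involves only $u_{j_k}$, the envelopes $v_k=(\sup_{l\geq k}u_{j_l})^*$, and the measures $(dd^cu_{j_k})^n=\mu_{j_k}\leq(dd^cw)^n$, so it can be checked without knowing $(dd^cv)^n$: one splits by a cutoff $\chi$, applies Lemma 3.1 of \cite{Ce3} to the fixed measure $1_{\{f>-\infty\}}\chi(dd^cw)^n$, which vanishes on pluripolar sets by Proposition~\ref{pro-1-1-1-1-1}, and bounds the remainder by $2\int_\Omega-\max(\rho,-\varepsilon)(dd^cw)^n$). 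This gives $u_{j_k}\to v$ in $C_n$-capacity \emph{first}; only then does Cegrell's convergence theorem \cite{Ce3} yield $(dd^cv)^n=\lim\mu_{j_k}=\mu_0$, and Proposition~\ref{Hong} gives $v=u_0$. Note the main theorem applies with limit $v$, whatever it is -- you restricted yourself to invoking it only after knowing the limit is $u_0$, which is why you were forced into the unproved continuity claim.

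Your final step also has secondary gaps: after securing $u_j\to u_0$ a.e.\ you verify condition (b), whose lower estimate you reduce to (i) $\int_\Omega g_0\,d\mu_j\to\int_\Omega g_0\,d\mu_0$ for the quasicontinuous $g_0=\max(u_0/a,\rho)$, and (ii) $\int_\Omega\bigl(g_0-\max(u_j/a,\rho)\bigr)^+\,d\mu_j\to0$. Item (ii) needs $\max(u_j/a,\rho)\to g_0$ almost everywhere with respect to $(dd^cw)^n$, which does not follow from Lebesgue-a.e.\ convergence when $(dd^cw)^n$ has a singular part; one must first observe that the integrand vanishes on the pluripolar carrier $\{f=-\infty\}$ (where $u_j=u_0=-\infty$, so both maxima equal $\rho$) and then run a Lemma 3.1-of-\cite{Ce3} type argument on the non-pluripolar part. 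This is repairable, but the paper avoids it entirely by verifying condition (c) again for the full sequence, where both integrands are paired with the same measure $(dd^cu_j)^n\leq(dd^cw)^n$ and the nonnegative difference $\max(v_j/a,\rho)-\max(u_j/a,\rho)$ tends to zero, so the single domination suffices. In short: the route through (c) is what makes both the cluster-point identification and the final convergence tractable, and its absence is the genuine gap in your proposal.
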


\begin{proof} 
By Lemma \ref{le4.1111} there exist unique $u_j\in \mathcal N^a(\Omega,f)$ such that $u_j\geq w$ and $(dd^c u_j)^n=\mu_j$ in $\Omega$. 
Since $u_j\geq w$,  the sequence $\{u_j\}$  is compact in $ L^1_{loc}(\Omega)$. 
Let $u$ be a cluster point and let $\{u_{j_k} \}$ be a subsequence of the sequence $\{u_{j} \}$ such that $u_{j_k}\to u$ a.e. in $\Omega$.   Put
$v_k:=\left( \sup_{l\geq k} u_{j_l} \right)^*.$ 
We claim that
\begin{equation}\label{eq-1}
\lim_{k\to+\infty} \int_{\Omega} \left[ \max \left( \frac{v_k}{a}, \rho\right) -\max \left( \frac{u_{j_k}}{a}, \rho\right)  \right]   (dd^c u_{j_k} )^n=0,
\end{equation} 
for every $a>0$. Indeed,  let $\varepsilon>0$. Choose  $\chi\in \mathcal C^\infty_0(\Omega)$ such that $0\leq \chi \leq 1$ and
$\{\chi=1\}\subset \{\rho<-\varepsilon\}$. By Proposition \ref{pro-1-1-1-1-1} we have that the measure $ 1_{\{f>-\infty\}} \chi (dd^c w)^n$  vanishes on all pluripolar subsets of $\Omega$. 
By Lemma 3.1 in \cite{Ce3} we get
\begin{align*}
0&\leq \limsup_{k\to+\infty} \int_{\Omega} \left[ \max \left( \frac{v_k}{a}, \rho\right) -\max \left( \frac{u_{j_k}}{a}, \rho\right)  \right] \chi  (dd^c u_{j_k} )^n
\\&= \limsup_{k\to+\infty} \int_\Omega \left[ \max \left( \frac{v_k}{a}, \rho\right) -\max \left( \frac{u_{j_k}}{a}, \rho\right)  \right]  1_{\{f>-\infty\}} \chi (dd^c u_{j_k} )^n
\\&\leq \limsup_{k\to+\infty} \int_\Omega \left[ \max \left( \frac{v_k}{a}, \rho\right) -\max \left( \frac{u_{j_k}}{a}, \rho\right)  \right]  1_{\{f>-\infty\}} \chi (dd^c w )^n
\\&=\int_\Omega \left[ \max \left( \frac{u}{a}, \rho\right) -\max \left( \frac{u}{a}, \rho\right)  \right] 1_{\{f>-\infty\}} \chi (dd^c w )^n 
\\&=0.
\end{align*}
It follows that 
\begin{align*}
0&\leq \limsup_{k\to+\infty} \int_{\Omega} \left[ \max \left( \frac{v_k}{a}, \rho\right) -\max \left( \frac{u_{j_k}}{a}, \rho\right)  \right]   (dd^c u_{j_k} )^n
\\&\leq \limsup_{k\to+\infty} \int_{\Omega} \left[ \max \left( \frac{v_k}{a}, \rho\right) -\max \left( \frac{u_{j_k}}{a}, \rho\right)  \right] (1- \chi)  (dd^c u_{j_k} )^n
\\&\leq \limsup_{k\to+\infty} \int_{\{\rho\geq -\varepsilon\}} \left[ -\max \left( \frac{v_k}{a}, \rho\right) -\max \left( \frac{u_{j_k}}{a}, \rho\right)  \right]   (dd^c u_{j_k} )^n
\\& \leq 2 \int_{\Omega} -\max(\rho,-\varepsilon) (dd^c w)^n.
\end{align*}
Let $\varepsilon\searrow 0$ we obtain \eqref{eq-1}.
This proves the claim, and therefore, by the main theorem we get 
$u_{j_k}\to u$ in $C_n$-capacity in $\Omega$ as $k\to+\infty$. Hence, by \cite{Ce3} we have $(dd^c u)^n=\mu_0$ in $\Omega$.
It is clear that $u\in \mathcal N^a(\Omega,f)$. 
From the uniqueness  of $u_0$ we get 
$u=u_0$. Thus, $u_{j_k}\to u_0$ a.e. in $\Omega$.   It follows that  $u_j\to u_0$ a.e. in $\Omega$.
Similarly, we get
$$\lim_{j\to+\infty} \int_{\Omega} \left[ \max \left( \frac{v_j}{a}, \rho\right) -\max \left( \frac{u_{j}}{a}, \rho\right)  \right]   (dd^c u_{j} )^n=0,$$
for every $a>0$, where $v_j:=(\sup_{k\geq j} u_k)^*.$
Now, again by the  main theorem  we get  $u_j\to u_0$ in $C_n$-capacity in $\Omega$.
The proof is complete.
\end{proof}



\begin{thebibliography}{00}



\bibitem{ACCH} P. \AA hag, U. Cegrell, R. Czy\. z and P. H. Hiep,  {\it Monge-Amp\`{e}re measures on pluripolar sets}, J. Math. Pures Appl., {\bf 92} (2009), 613--627.

\bibitem{BT76}E. Bedford and B.A.Taylor, {\em The Dirichlet problem for a complex Monge-Amp\`ere operator}, Invent. Math.,{\bf 37}(1976), 1--44.


\bibitem{BT82} E. Bedford and B. A. Taylor, {\em  A new capacity for plurisubharmonic functions}, Acta Math., {\bf 149} (1982), 1--40.

\bibitem{Blo95} Z. B\l ocki, {\em  On the $L^p$-stability for the complex Monge-Amp\`{e}re operator}, Michigan Math. J., {\bf 42 }(1995), 269--275.



\bibitem{Ce98} U. Cegrell, {\em Pluricomplex energy}, Acta Math. {\bf 180} (1998), 187--217.

\bibitem{Ce04} U. Cegrell, {\em The general definition of the complex Monge-Amp\` ere operator}, Ann. Inst. Fourier (Grenoble), {\bf 54}, 1 (2004), 159--179.

\bibitem{Ce08} U. Cegrell, {\em A general Dirichlet problem for the complex Monge-Amp\`ere operator},  Ann. Polon. Math., {\bf 94} (2008), 131--147.



\bibitem{Ce3} U. Cegrell, {\it Convergence in Capacity}, Canad. Math. Bull., {\bf 55}. 2 (2012), 242--248.



\bibitem{CK} U. Cegrell and  S. Ko\l odziej, {\em The equation of complex Monge-Amp\`{e}re type and stability of solutions}, Math. Ann., {\bf  334} (2006), 713--729.


\bibitem{GZ} V. Guedj and A. Zeriahi, {\em Stability of solutions to complex Monge-Amp\`{e}re equations in big cohomology classes}, Mathematical Research Letters, {\bf 19} (2012), Number 5, 1025--1042.

\bibitem{HH} L. M. Hai and N. X. Hong, {\em Subextension of plurisubharmonic functions without changing the  Monge-Amp\`{e}re measures and applications},  Ann. Polon. Math.,  {\bf 112} (2014), 55--66.

\bibitem{HHDung}  L. M. Hai, N. X. Hong and T. V. Dung, {\em Subextension of plurisubharmonic functions with boundary values in weighted pluricomplex energy classes}, Complex Var. Elliptic Equ.,
{\bf 60}, Issue 11 (2015), 1580--1593.

\bibitem{HHHP} L. M. Hai, P. H. Hiep, N. X. Hong and N. V. Phu, {\em The Monge-Amp\`{e}re type equation in the weighted pluricomplex energy class}, International Journal of Mathematics, {\bf 25}, No. 5 (2014), 1450042 (17 pages).


\bibitem{HTH}   L. M. Hai, N. V. Trao and N. X. Hong, {\em The complex Monge-Amp\`{e}re equation in unbounded hyperconvex domains
in $\mathbb C^n$}, Complex Var. Elliptic Equ., {\bf 59} (2014),  no. 12,  1758--1774.


\bibitem{H3} P. H.  Hiep, {\it Convergence in capacity and applications}, Math. Scand., {\bf  107} (2010), 90--102.

\bibitem{Hong} N. X. Hong, {\em Monge-Amp\`{e}re measures of maximal subextensions of plurisubharmonic functions with given boundary values}, Complex Var. Elliptic Equ., {\bf 60} (2015),  no. 3,   
429--435.

\bibitem{Hong15} N. X. Hong, {\em The locally $\mathcal F$-approximation property of bounded hyperconvex domains},  J. Math. Anal. Appl., {\bf 428 }(2015), 1202--1208.

\bibitem{Kl91} M. Klimek, {\em  Pluripotential Theory}, The Clarendon Press Oxford University Press, New York, 1991, Oxford Science Publications.

\bibitem{KH} N. V. Khue and P. H. Hiep, {\it A comparison principle for the complex Monge-Amp\`{e}re operator in Cegrell's classes and applications}, Trans. Am. Math. Soc., {\bf 361}. 10 (2009), 5539--5554.


\bibitem{Ko05} S. Ko{\l}odziej, {\em The Complex Monge-Amp\`ere Equation and Pluripotential Theory}, Memoirs of AMS., {\bf 840} (2005).

\bibitem{Xi96} Y. Xing, {\it Continuity of the complex Monge-Amp\`ere operator}, Proc. Amer. Math. Soc., {\bf 124}. 2 (1996), no. 2, 457--467.

\bibitem{Xing} Y. Xing, {\em Convergence in capacity}, Ann. Inst. Fourier (Grenoble), {\bf 58} (2008), no. 5, 1839--1861.

\end{thebibliography}
\end{document}